\newcommand{\C}{\mathbb{C}}
\newcommand{\N}{\mathbb{N}}
\newcommand{\e}{\varepsilon}
\DeclareMathOperator{\Hol}{{Hol}}
\DeclareMathOperator{\codim}{{codim}}
\theoremstyle{definition}
\newtheorem*{thm}{Theorem}
\newtheorem*{remark1}{Remark  1}
\newtheorem*{remark2}{Remark  2}
\newtheorem*{question1}{Question   1}
\newtheorem*{question2}{Question   2}
\newtheorem*{question3}{Question   3}
\theoremstyle{plain}
\newtheorem*{corollary1}{Corallary 1    (Fundamental Theorem of Algebra) }
\newtheorem*{corollary2}{Corallary 2   (Gelfand's Theorem)}
\begin{document}

\title[FUNDAMENTAL THEOREM OF ALGEBRA]{A BANACH ALGEBRAIC VERSION OF THE FUNDAMENTAL THEOREM OF ALGEBRA}
\author{Ali Taghavi}

\address{Faculty of Mathematics and Computer Science,  Damghan  University,  Damghan,  Iran.}
\email{taghavi@du.ac.ir}

%\date{April 14, 2011}

\subjclass [2000]{47A53, 46T25}
\keywords{Fundamental theorem of algebra, Fredholm operators, Banach algebras}
\begin{abstract}
For a monic polynomial $p(z)$ with coefficients in a unital complex Banach algebra, we prove that there exist a complex number $z$ such that $p(z)$ is not invertible.
\end{abstract}

\maketitle

\section*{Introduction}
\noindent
The purpose of this paper is to give a proof for a generalization of the fundamental theorem of algebra, briefly
FTA,  as follows:
For a monic polynomial $p(z)$ with coefficients in a unital complex Banach algebra $A$, there exist a complex number $z$ such that $p(z)$ is not an invertible element of A. Putting $A=\mathbb{C}$, it give's us a new proof of the fundamental theorem of algebra. The method of proof of our main theorem is essentially based on the theory of Fredholm operators on Banach spaces and the stability of Fredholm index under small perturbation. The part of Fredholm operator theory that will be used in this paper, is valid for both complex  and real Banach spaces, equivalently. This would shows that this new proof of FTA is selfcontained and is not a circular argument.\\
For an element $a$ in a unital complex Banach algebra, we apply our main theorem to the polynomial $p(z)=z-a$, then we obtain a new proof for the fact that the spectrum of $a$ is not empty. In the last section of the paper, we give some questions that are somehow related to the main theorem or its method of proof.

\section{Preliminaries}

Let $A$ be a unital complex Banach algebra. By $\ell^{1}(A)$,  we mean the space of all sequence $(a_{i})_{i\in\mathbb{N}}$ , $a_{i} \in A$,  such that
 $\sum_{i=0}^\infty |a_i|<\infty $. $\ell^{1}(A)$ is a Banach  space, with the norm $|(a_{i})|=\sum_{i=0}^\infty |a_i|$ see,  \cite [p.51] {KADRIN}. A sequence $(a_{i})$ is called eventually zero, if there exist a natural number $N$ such that for all $i \geq N$,  $a_{i}=0$. It can be easily shown that the space of all  eventually zero sequences is a dense subspace of $\ell^{1}(A)$.\\
 By $\ell^2$ we mean the Hilbert  space of all square summable  sequence $(z_{i})$ of complex numbers. $B(\ell^2)$ is the space of bounded operators on $\ell^2$ and $\mathcal{K}$ is the space of compact operators on  $B(\ell^2)$. The Calkin algebra is the quotient  algebra $B(\ell^2)/\mathcal{K}$. A norm preserving operator on a Banach space $X$ is called an isometry. Using
an straightforward  application of the triangle inequality, one can easily shows that for an isometry $T\in B(X)$, there exist a neighborhood $U$ of $T$, with respect to operator norm, such that any $S\in U$ satisfies $|S(x)|\geq k|x|$, for all $x\in X$ and some positive constant k. Then any such operator $S$ must be injective. Furthermore,  if $S(x_{n})$ is a Cauchy sequence, then $(x_{n})$ is a cauchy sequence, too.\\
Let $X$ be a Banach space, a bounded operator $T\in B(X)$ is called a Fredholm operator if  $\dim \ker T<\infty$, $\codim Rang (T) = \dim(X/Rang(T)) <\infty$ and Rang(T) is closed in $X$. For such $T$, the Fredholm index of $T$ is $Ind(T)=dim \ker T-\codim Rang(T)$.
The (essential) spectrum of an operator $T$ is the set of  all complex numbers $\lambda$ such that $T- \lambda I$ is not (fredholm) invertible.
For every $m \in \N$, the operator
\[
 \left\{ \!\!
  \begin{array}{l}
    S_m:\ell^1(A) \to \ell^1(A) \\[-2ex]
    (a_0,a_1,a_2, \ldots) \longmapsto (\overbrace{0,0,\ldots,0}^{m\text{ times}},a_0,a_1, \ldots)
  \end{array}\right.
\]
is called the \emph{$m$-shift} operator on $\ell^1(A)$.\\
 Obviously $S_{m}$ is a fredholm operator with $Ind(S_{m})=-m$. It is well known that the spectrum of the shift operator $S_{1}$ is the unit disc $\mathbb{D}=\{z\in\mathbb{C} :|z|\leq 1\}$   and its essential spectrum is the unit circle $S^{1}=\{z\in\mathbb{C} :|z|=1\}$. Moreover for each $\lambda$ with $|\lambda |<1$, $ind(S_{1}-\lambda)=-1$,
 see \cite[Chapter 11, Example 4.10]{ACFA} , and for each $\lambda$ with $|\lambda |>1$, \;   $Ind(S_{1}- \lambda) =0$\; since $S_{1}-\lambda$ is an  invertible operator.  $S_{m}$ is also an isometry.\\

The space of  Fredholm operators is an open subset of $B(X)$ and the index is locally constant,  then it remains unchanged with small perturbation. This property is called "invariance of Fredholm index with small perturbation". For two Fredholm operators $T$ and $S$, $TS$ is also Fredholm and $Ind(TS)=Ind(T)+ind(S)$. For a Hilbert space $H$, $T\in B(H)$ is fredholm if and only if the coset of  $T$ is an invertible element of the Calkin algebra $B(H)/\mathcal{K}(H)$. This implies that the composition of two (or more) commuting operators in $B(H)$ is Fredholm if and only if each of them is a  Fredholm operator.  Because in an algebra, the product of commuting elements is invertible if and only if each of them is invertible,  see ~\cite[p.271]{Rudin}.  For more information  about Fredholm operator theory see  \cite[Chapter 2]{CBS} and  \cite[Chapter 11]{ACFA}.\\
A map $f:\mathbb{C}\to A$ is called a holomorphic map if, for all $z_{0}\in \mathbb{C}$,
 $f'(z_{0})= \lim_{z \to z_{0}} \frac{f(z)-f(z_{0})}{z-z_{0}}$ exists. The set of all holomorphic maps from
 $\mathbb{C}$ to $A$ is denoted by $Hol(\mathbb{C}, A)$. Obviously $Hol(\mathbb{C}, A)$ is closed under addition and multiplication. Moreover assume  that $f\in Hol(\mathbb{C}, A)$ and $f(z)$ is an invertible element of $A$, for all $z$. Then the map $f(z)^{-1}$ is a holomorphic map from $\mathbb{C}$ to A. Because $\frac{f(z)^{-1}-f(z_{0})^{-1}}{z-z_{0}}=-f(z)^{-1}\frac{f(z)-f(z_{0})}{z-z_{0}}f(z_{0})^{-1}$. Taking a limit we obtain $(f^{-1})^{'} (z_{0})=-f(z_{0})^{-1}f'(z_{0})f(z_{0})^{-1}$.\\
 Let $f\in Hol(\mathbb{C}, A)$, then $f$ can be expanded as a convergent power series $f(z)=\sum_{n=0}^{\infty} a_{n}z^{n}$, $a_{i}\in A$, see  ~\cite[Theorem~3.3.1]{KADRIN}. Moreover  $\sum |a_{n}|<\infty$,  because $|a_{n}|=\frac{|a_{n}z^{n}|}{|z^{n}|}$, $a_{n}z^{n}$ is a bounded sequence, being  the common term of a convergent series  and $\sum1/|z^{n}|$ is a convergent geometric series, provided $|z|>1$. So a comparison test implies that $\sum |a_{n}|<\infty$.\\
 Then there is  a natural embedding of $Hol(\mathbb{C}, A)$ into
 $\ell^{1}(A)$, which sends $f(z)=\sum_{n=0}^{\infty} a_{n}z^{n}$ to $ (a_{n}) $. With this embedding, $Hol(\mathbb{C}, A)$  is considered as a dense subspace of $\ell^{1}(A)$, since it contains all polynomials and the space of all polynomials corresponds to the space of all eventually zero sequence, which is a dense subspace of $\ell^{1}(A)$.\\
 Note that the shift operator $S_{1}: \ell^{1}(A) \longmapsto \ell^{1}(A)$ restricts to multiplication operator on $Hol(\mathbb{C}, A)$ that sends $f(z)$ to $zf(z)$.

\section{Main Result}
In this section we prove the main theorem of the paper, which can be considered as a Banach algebraic analogy of FTA
\begin{thm}
  For every  monic polynomial $p(z)$ with coefficients in a unital complex Banach algebra, there exists a
  $z\in\mathbb{C}$ such that $p(z)$ is not invertible.
\end{thm}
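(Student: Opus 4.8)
The plan is to assume the conclusion fails, that is, that $p(z)$ is invertible in $A$ for every $z\in\mathbb{C}$, and to derive a contradiction from the stability of the Fredholm index. Write $p(z)=z^{n}+a_{n-1}z^{n-1}+\cdots+a_{0}$ with $a_{j}\in A$ and $n\geq 1$ (a monic polynomial of degree $0$ is the unit and carries no content). The one genuinely delicate point is to deform $p$ to the pure power $z^{n}$ in a way that preserves the property of being invertible at every point of $\mathbb{C}$, and the trick is to rescale the variable rather than the coefficients: for $s\in(0,1]$ put $q_{s}(z):=s^{n}p(z/s)=z^{n}+s\,a_{n-1}z^{n-1}+s^{2}a_{n-2}z^{n-2}+\cdots+s^{n}a_{0}$. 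Each $q_{s}$ is monic of degree $n$; since $q_{s}(z)=s^{n}p(z/s)$ and every value $p(w)$ is invertible, $q_{s}(z)$ is invertible for all $z\in\mathbb{C}$; and $q_{s}\to z^{n}$ coefficientwise as $s\to 0^{+}$.

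Next I would convert each $q_{s}$ into an operator on $\ell^{1}(A)$. Let $M_{q_{s}}$ be left convolution by the (finitely supported) coefficient sequence of $q_{s}$; equivalently, $M_{q_{s}}$ is the bounded extension to $\ell^{1}(A)$ of the multiplication operator $f(z)\mapsto q_{s}(z)f(z)$ on the dense subspace $\Hol(\mathbb{C},A)$, so that $M_{q_{s}}=S_{n}+\sum_{j=0}^{n-1}s^{n-j}a_{j}S_{1}^{j}$, where $a_{j}$ here denotes left multiplication by $a_{j}$ (which commutes with $S_{1}$). Since $q_{s}(z)$ is invertible for every $z$, the map $z\mapsto q_{s}(z)^{-1}=s^{-n}p(z/s)^{-1}$ is holomorphic, because an invertible-valued holomorphic map has a holomorphic inverse (recalled in the Preliminaries); hence it lies in $\Hol(\mathbb{C},A)\subseteq\ell^{1}(A)$, and I may form the associated bounded left-convolution operator $M_{q_{s}^{-1}}$. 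As left-convolution operators compose exactly as their symbols multiply and $q_{s}(z)\,q_{s}(z)^{-1}=q_{s}(z)^{-1}q_{s}(z)=1$ for all $z$, we obtain $M_{q_{s}}M_{q_{s}^{-1}}=M_{q_{s}^{-1}}M_{q_{s}}=I$. Thus $M_{q_{s}}$ is invertible on $\ell^{1}(A)$, in particular Fredholm with $\Ind(M_{q_{s}})=0$, for every $s\in(0,1]$.

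The last step is the index count. From $M_{q_{s}}=S_{n}+\sum_{j=0}^{n-1}s^{n-j}a_{j}S_{1}^{j}$ we get $\|M_{q_{s}}-S_{n}\|\leq\sum_{j=0}^{n-1}s^{n-j}\|a_{j}\|\to 0$ as $s\to 0^{+}$. By the Preliminaries, $S_{n}$ is Fredholm with $\Ind(S_{n})=-n$, and the Fredholm operators form an open set on which the index is locally constant; hence there is $\varepsilon>0$ such that every $T$ with $\|T-S_{n}\|<\varepsilon$ is Fredholm with $\Ind(T)=-n$. Choosing $s$ small enough that $\|M_{q_{s}}-S_{n}\|<\varepsilon$ forces $\Ind(M_{q_{s}})=-n$, contradicting $\Ind(M_{q_{s}})=0$ since $n\geq 1$. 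This contradiction proves the theorem, and the case $A=\mathbb{C}$, where $S_{n}$ acts on $\ell^{1}$, recovers the Fundamental Theorem of Algebra.

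I expect the main obstacle to be precisely the choice of deformation in the first paragraph. The naive homotopy $p_{t}(z)=z^{n}+t\bigl(a_{n-1}z^{n-1}+\cdots+a_{0}\bigr)$, $t\in[0,1]$, does not work: for $0<t<1$ there is no reason $p_{t}(z)$ should remain invertible for all $z$, so the operators $M_{p_{t}}$ need not be invertible, nor even Fredholm, along that path, and the argument collapses. Rescaling the variable instead only dilates the (by assumption empty) zero set of $p$ toward the origin, which costs nothing and keeps every $M_{q_{s}}$ invertible. The remaining ingredients are routine given the Preliminaries: boundedness and multiplicativity of convolution operators on $\ell^{1}(A)$, the absolute summability of the Taylor coefficients of $q_{s}(z)^{-1}$, and the appeal to the stated facts that $S_{n}$ is Fredholm of index $-n$ on $\ell^{1}(A)$ and that the Fredholm index is locally constant.
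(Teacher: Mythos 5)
Your proposal is correct and follows the same overall strategy as the paper: the same rescaling $q_s(z)=s^n p(z/s)$ (the paper writes $\e^n p(z/\e^n)$, an evident typo for $\e^n p(z/\e)$, since the displayed coefficients match your version), the same operator $S_n+\sum_{j=0}^{n-1}s^{n-j}a_jS_1^{j}$ on $\ell^1(A)$, and the same contradiction between index $0$ and index $-n$ via local constancy of the Fredholm index. Where you genuinely diverge is the middle step, showing that $M_{q_s}$ has index $0$. The paper proves $Q_\e$ is bijective by hand: injectivity and closed range because $Q_\e$ is a small perturbation of the isometry $S_n$ and hence bounded below, and surjectivity by solving $M_\e x_k=b_k$ on the dense subspace $\Hol(\C,A)$ and passing to the limit using the lower bound. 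You instead exhibit an explicit two-sided inverse: the Taylor coefficients of $q_s^{-1}$ are absolutely summable (as the Preliminaries establish for any entire $A$-valued map), so they define a bounded convolution operator $M_{q_s^{-1}}$ on $\ell^1(A)$, and $M_{q_s}M_{q_s^{-1}}=M_{q_s^{-1}}M_{q_s}=I$ because convolution realizes pointwise multiplication of symbols. This is cleaner and avoids the density/Cauchy-sequence argument entirely; the price is the (easy but necessary) verification that $\ell^1(A)$ is a convolution Banach algebra, that composition of these operators corresponds to multiplication of symbols with the $A$-coefficients kept on the correct side (since $A$ need not be commutative), and that the coefficient sequence of the constant function $1$ is recovered — all routine. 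One caveat applies equally to both arguments and is inherited from the Preliminaries rather than introduced by you: the cokernel of $S_n$ on $\ell^1(A)$ is $A^n$, which is finite-dimensional over $\C$ only when $A$ is, so for general $A$ one should strictly speaking invoke the stability of the semi-Fredholm (Kato) index of the bounded-below, non-surjective operator $S_n$ rather than the classical Fredholm index; the contradiction with invertibility of $M_{q_s}$ survives this correction.
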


\begin{proof}
  Assume that $p(z)=z^n+ a_{n-1}z^{n-1}+ \cdots +a_1z+a_0$  and
  for every $z\in \C$,  $p(z)$ is invertible. Then for all $z$ and all $\e\neq0$
  \[
   q_{\e}(z)=\e^n p(z/\e^n)= z^n +\e a_{n-1}z^{n-1} + \e^2 a_{n-2} z^{n-2} + \cdots + \e^n a_0,
  \]
  is invertible. Define an operator $M_{\e}:\Hol(\C) \to \Hol(\C)$ by multiplication
  $M_{\e}(f)= q_{\e}f$.
The operator $M_{\e}$ can be extended to
  a bounded linear operator $Q_{\e}:\ell^{1} (A) \to \ell^{1}(A)$ where  $Q_{\e}=S_{n}+\e a_{n-1}S_{n-1}+\cdots+\e^{n}a_{0}$.  Since $q_{\e}(z)$  is invertible, then $q_{\e}(z)^{-1}$ is a holomorphic map, so $M_{\e}$ is a surjective operator. We use this to prove that $Q_{\e}$ is also surjective.\\
  Let $b\in \ell^{1}(A)$, there is a sequence $b_{n}\in Hol(\mathbb{C}, A)$ that converges to b, since
  $Hol(\mathbb{C}, A)$ is dense in $\ell^{1}(A)$. Since $M_{\e}$ is surjective, there is a sequence $x_{n}\in Hol(\mathbb{C}, A)$ with $Q_{\e}(x_{n})=M_{\e}(x_{n})=b_{n}$. $Q_{\e}$ is a perturbation of the isometric operator $S_{n}$. Then by the argument that we stated in section $1$, $(x_{n})$ must be a Cauchy sequence, so it converges to a point $x\in \ell^{1} (A)$. This implies that $Q_{\e}(x)=b$, so $Q_{\e}$ is surjective. $Q_{\e}$ is also injective because it is a perturbation of an isometry then $Ind(Q_{\e})=0$. On the other hand   $Ind(Q_{\e})= Ind(S_{n})=-n$, by invariance of Fredholm index with small perturbation. This is a contradiction and the proof of the theorem is completed.

\end{proof}

The following two corollaries are immediate consequence of the above theorem:

\begin{corollary1}

A polynomial equation $P(z)=0$ with complex coefficients has at least one root.

\end{corollary1}

\begin{corollary2}
The spectrum of an element of a unital complex Banach algebra is not empty
\end{corollary2}

\begin{remark1}We explain that our new proof of the FTA is really a self contained proof. Namely, the FTA is not implicitly used in our proof.  The proof of the main theorem is based on Fredholm operator theory and invariance of Fredholm index with small perturbation. The part of this theory that we needed, has the same statement and formulation for both real and complex Banach spaces. The other object that we used in our proof is $Hol(\mathbb{C}, A)$. The properties of this space that we used, are really  independent of the FTA. So the FTA is not used in this new proof, implicitly.\\
There is a classical proof of the corollary 2, Gelfand's theorem, which use's the Liouville's theorem, or some related methods, see \cite[Theorem 10.13]{Rudin}.  Obviously the method of this classical proof can be applied to give another proof for the main theorem of this paper. In fact $p(z)^{-1}$ is a bounded holomorphic function defined on $\mathbb{C}$, so is  a constant map. But this classical proof is actually based on the theory of integration of Banach space
valued maps defined on complex numbers,  since the  Cauchy integral formula is used in the proof of Liouville's theorem.  So both proofs of our main theorem, the classical one and ours, use independently two different theory: the first is the integration theory which leads to Liouville's theorem and the second is Fredholm operator theory.

\end{remark1}

\section{Questions}
let $p(z)=z^n+ a_{n-1}z^{n-1}+ \cdots +a_1z+a_0$ be a monic polynomial with complex coefficients. We assign to $P(z)$, the operator $P=S_{n}+a_{n-1}S_{n-1}+\cdots+a_{0}$, as a bounded operator on $\ell^{2}$. We prove directly that $P$ is a Fredholm operator if and only if the polynomial $p(z)$ has no root on the unit circle $S^{1}$ in $\mathbb{C}$. In this case $Ind (P)=-k$,  where $k$ is the number of roots of $p(z)$ that lie in the interior of unit circle. However if we apply the essential spectrum on both side of the equation $P=p(S_{1})$,  we would obtain another argument for the first part of this statement.\\
 First assume that $P$ is a Fredholm operator. let $p(z)=\prod_{i=1}^{n}(z-r_{i})$, we factor $P$ in the form  $P=\prod_{i=1}^{n}(S_{1}-r_{i})$. $P$ is a composition of $n$ commuting operators $S_{1}-r_{i}$, then each $S_{1}-r_{i}$ must be Fredholm, so $|r_{i}|\neq 1$ for all $i$,\;    since the essential spectrum of the shift operator $S_{1}$ is the unit circle $S^{1}$. Assume that $|r_{i}|<1$ for $i\leq k$ and $|r_{i}|>1$ for $i>k$.  Then $ind(S_{1}-r_{i})=0$ \; for $i>k$ and  $Ind(S_{1}-r_{i})=-1$ \;for $i\leq k$, as we mentioned in section $1$ . Then   $Ind(P)=\sum_{i=1}^{n}Ind(S_{1}-r_{i})=-k$. The proof of the converse is trivial and is omitted. Now consider the finite dimensional vector space $W$ consisting all polynomials $p(z)$ with complex coefficients such that degree of $p$ is not greater than $n$. $W$ can be considered as a finite dimensional subvector space of $B(\ell^{2})$. Put F(W)=$\{T\in W :T$ is a Fredholm operator\}. The above computation shows that the index, as a function on F(W), is a bounded function. This simple observation leads us to the following question;
\begin{question1}
Let  $W$ be a finite dimensional subvector space of $B(\ell^{2})$. We denote by $F(W)$,  the set of all operators in $W$ that are Fredholm operators. Is the Fredholm index, a bounded function on $Fred(W)$.
\end{question1}

\begin{remark2}
The above question  can be interpreted as a question about the geometry of Fredholm operators. It actually asks: Can infinite number of connected components of $Fred(\ell^{2})$, intersect a finite dimensional subspace of $B(\ell^{2})$ ?
\end{remark2}
   A  complex algebra can not be necessarily considered as a complex Banach algebra. For example,  $C^{\infty} [0 \;\;1]$, the space of all smooth complex valued functions  on $ [0 \;\;1]$, admits no Banach algebra norm. For proof see \cite[p.51]{KAN}.  So it is natural to ask for a purely algebraic version of the main theorem. On the other hand, for a complex algebra $A$, the spectrum of an element may be empty. But if $\dim A$ is countable, then the spectrum is not empty, see \cite [Theorem 2.1.1]{CHG}. So the following question arises naturally :\\

\begin{question2}
Is the purely algebraic version of the main theorem true,\;  for countable dimensional algebras?
\end{question2}

In the main theorem, we can not drop the hypothesis that $p(z)$ is a monic polynomial, for example let a be a  quasinilpotent element of $A$, that is the spectrum of a is $\{0\}$, then $za+1$ is an invertible element, for all $z\in\mathbb{C}$. So it seems natural to ask the next question:

 \begin{question3}
 In the main theorem, can we drop the hypothesis of monicness of $p(z)$, provided that $A$ has no nontrivial quasinilpotent element?
  \end{question3}

\end{document}